\newtheorem{lemma}{Lemma}
\newtheorem{theorem}{Theorem}[section]
\newtheorem{corollary}{Corollary}
\newtheorem{definition}{Definition}
\newtheorem{Example}{Example}
\begin{document}

\title[Controlled Fusion Frames]
	{Some properties of Controlled Fusion Frames}

\author{H. Shakoory}
\address{Department of Mathematics\\ Shabestar Branch\\ Islamic Azad University\\ Shabestar\\ Iran.}
\curraddr{}
\email{Habibshakoory@yahoo.com}
\thanks{}

\author{R. Ahmadi}
\address{Institute of Fundamental Sciences\\University of Tabriz\\, Iran\\}
\email{rahmadi@tabrizu.ac.ir}
\thanks{}

\author{N. Behzadi}
\address{Institute of Fundamental Sciences\\University of Tabriz\\, Iran\\}
\email{n.behzadi@tabrizu.ac.ir}
\author{S. Nami}
\address{Faculty of Physic\\University of Tabriz\\, Iran\\}
\email{S.Nami@tabrizu.ac.ir}
\subjclass[2018]{ 94A12, 42C15, 68M10, 46C05} 
\date{}
\dedicatory{}
\keywords{Fusion Frame, Controlled Frame, Controlled Fusion Frame}

\begin{abstract}
Controlled frames in Hilbert spaces  have been  introduced by Balazs, Antoine and Grybos  to improve the numerical output of in relation to algorithms for inverting the frame operator.  In this paper we have introduced and displayed some new concepts and results on controlled  fusion frames for Hilbert spaces. It is shown that controlled fusion frames as a generalization of fusion frames give a generalized way to obtain numerical advantage in the sense of reconditioning to check the fusion frame condition. For this end, we introduce the notion of Q-duality for Controlled fusion frames. Also, we survey the robustness of Controlled fusion frames under some perturbations.

\end{abstract}
\maketitle
\section{Introduction}
Frames, as a expansion of the bases in Hilbert spaces, were first introduced
by Duffin and Schaeffer During their study of nonharmonic Fourier series in 1952, they (\cite{Duffin}) introduced frames as a expansion of the bases in Hilbert spaces. 
recently, frames play an fundamental role not only in the visionary  but also in many kinds of applications and have been widely applied in filter bank theory, coding and communications, signal processing, system modeling, see(\cite{Musazadeh}),(\cite{Strohmer}),({\cite{Feichtinger}}), ({\cite{cassaza}}), ({\cite{Blo}}).

One of the newest generalization of frames is controlled frames. Controlled frames have been introduced  to improve the numerical efficiency of interactive algorithms for inverting the frame operator on abstract Hilbert spaces (\cite{Balazs}), (\cite{Khosravi}),(\cite{Hua}).

This maniscript is organized as follows. In section 2, we remined some definitions and Lemmas in frames and operators theory. In section 3, we fix the notations of this paper, summarize known and prove some new results. In section 4, we defined Q-duality and perturbation for $CC^{\prime}$ -Controlled fusion frame and express some results about them.

Throughout this paper,  $H$ is a separable Hilbert space, $\mathcal{B}(H)$ is the family of all bounded linear operators on $H$ and $GL(H)$ denotes the set of all bounded linear operators which have bounded inverse. Let $GL^{+}(H)$ be the set of all positive operators in $GL(H)$.

It is easy to check that if $C, C^{\prime} \in GL(H)$, then  $C^{\prime*}, C^{\prime-1}$ and $CC^{\prime}$ are  in $GL(H)$.
Assume that $Id_{H}$ be the identity operator on $H$  and $\pi_W$ be the orthogonal projection from $H$ onto a closed subspace $V\subseteq H$.
\section{Preliminaries}
In this section, some necessary definitions and lemmas are introduced.\\
\begin{definition}
A sequence $\lbrace f_{i} \rbrace _{i\in I}$ in  $H$ is a frame  if there exist constants $0 <A \leqslant B <\infty$ such that for all $f\in H$
\begin{align*}
A\Vert f \Vert ^{2}\leqslant \sum _{i\in I} \vert\langle f,f_{i}\rangle\vert^{2}\leqslant B \Vert f \Vert ^{2}.
\end{align*}
\end{definition}
The constants $A,B$ are called frame bounds; $A$ is the lower bound and $B$ is the upper bound. The frame is thight if $A=B$, it is called a Parseval frame if $A=B=1$. If  we only have the upper bound, We call $\lbrace f_{i} \rbrace _{i\in I}$ a Bessel sequence. If $\lbrace f_{i} \rbrace _{i\in I}$ is a Bessel sequence then the following operators are bounded:
\begin{align*}
T&:\ell^{2}(I)\longmapsto H\\
&T(c_{i})=\sum _{i\in I}c_{i}f_{i}
\end{align*}
\begin{align*}
T^{*}&:H\longmapsto\ell^{2}(I)\\
&T^{*}f=\lbrace\langle f,f_{i}\rangle\rbrace _{i\in I}
\end{align*}
\begin{align*}
S&:H\longmapsto H\\
Sf&=TT^{*}f=\sum _{i\in I} \langle f,f_{i}\rangle f_{i}.
\end{align*}
These operators are called synthesis operator; analysis operator and frame operator,   respectively. The representation space employed in this setting is:
\begin{align*}
\ell^{2}(I)=\Big\lbrace\lbrace c_{i}\rbrace_{i\in I}:\ c_{i}\in\Bbb C ,\ \sum _{i\in I}\vert c_{i} \vert ^{2}<\infty \Big\rbrace
\end{align*}
\begin{definition}.
Let $W:=\{W_i\}_{i\in I}$ be a family of closed subspaces of $H$ and $v:=\{v_i\}_{i\in I}$ be a family of weights (i.e. $v_i>0$ for any $i\in I$). We say that $W$ is a fusion frame with respect to $v$ for $H$ if there exists $0<A\leq B<\infty$ such that for each $h\in H$
\begin{eqnarray*}
A\Vert h\Vert^2\leq\sum_{i\in I}v^{2}_i\Vert \pi_{W_i}(h)\Vert^2\leq B\Vert h\Vert^2.
\end{eqnarray*}
\end{definition}
\begin{lemma}(\cite{ga})\label{l1}
Let $V\subseteq H$ be a closed subspace, and $T$ be a linear  bounded operator on $H$. Then
$$\pi_{V}T^*=\pi_{V}T^* \pi_{\overline{TV}}.$$
If $T$ is a unitary (i.e. $T^*T=Id_{H}$), then
$$\pi_{\overline{Tv}}T=T\pi_{v}.$$
\end{lemma}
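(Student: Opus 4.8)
The plan is to prove both identities by unwinding the definitions of the orthogonal projections involved, using only the adjoint relation $\langle T^{*}g,h\rangle=\langle g,Th\rangle$ together with the continuity of the inner product (so that orthogonality to a set coincides with orthogonality to its closure). No heavy machinery is needed; everything reduces to checking containments of the form ``$T$ (or $T^{*}$) sends this subspace into that one''.

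For the first identity I would rewrite the claim in the equivalent form $\pi_{V}T^{*}(Id_{H}-\pi_{\overline{TV}})=0$ and observe that $Id_{H}-\pi_{\overline{TV}}=\pi_{(\overline{TV})^{\perp}}$. Thus it suffices to show that $T^{*}$ carries $(\overline{TV})^{\perp}$ into $V^{\perp}$: if $g\perp\overline{TV}$, then for every $v\in V$ we have $\langle T^{*}g,v\rangle=\langle g,Tv\rangle=0$ because $Tv\in TV\subseteq\overline{TV}$; hence $T^{*}g\in V^{\perp}$ and $\pi_{V}T^{*}g=0$. Applying this to $g=\pi_{(\overline{TV})^{\perp}}h$ for an arbitrary $h\in H$ yields $\pi_{V}T^{*}h=\pi_{V}T^{*}\pi_{\overline{TV}}h$, which is the assertion.

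For the second identity, assume $T^{*}T=Id_{H}$. Fix $h\in H$ and use the orthogonal decomposition $h=\pi_{V}h+\pi_{V^{\perp}}h$, so that $Th=T\pi_{V}h+T\pi_{V^{\perp}}h$. The first summand lies in $TV\subseteq\overline{TV}$. For the second, I would verify that $T\pi_{V^{\perp}}h\perp\overline{TV}$: for any $v\in V$, $\langle T\pi_{V^{\perp}}h,Tv\rangle=\langle T^{*}T\pi_{V^{\perp}}h,v\rangle=\langle\pi_{V^{\perp}}h,v\rangle=0$, so $T\pi_{V^{\perp}}h\perp TV$, and by continuity of the inner product also $T\pi_{V^{\perp}}h\perp\overline{TV}$. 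Hence $Th=T\pi_{V}h+T\pi_{V^{\perp}}h$ is exactly the orthogonal decomposition of $Th$ with respect to $\overline{TV}\oplus(\overline{TV})^{\perp}$, and by uniqueness of that decomposition $\pi_{\overline{TV}}Th=T\pi_{V}h$, i.e.\ $\pi_{\overline{TV}}T=T\pi_{V}$.

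The computations are all elementary; the only point that requires a little care — and the closest thing to an obstacle — is passing from orthogonality against the (possibly non-closed) set $TV$ to orthogonality against its closure $\overline{TV}$, which is forced on us because the projections in the statement are onto the closed subspace $\overline{TV}$ while the natural inner-product computations only produce vectors orthogonal to $TV$ itself; this is resolved by continuity of the inner product. It is also worth noting that the second part does not use surjectivity of $T$: the isometry relation $T^{*}T=Id_{H}$ alone suffices.
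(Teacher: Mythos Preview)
Your argument is correct: the reduction of the first identity to the inclusion $T^{*}\big((\overline{TV})^{\perp}\big)\subseteq V^{\perp}$ is the natural way to proceed, and your verification of that inclusion is clean; for the second identity, checking that $Th=T\pi_{V}h+T\pi_{V^{\perp}}h$ is precisely the orthogonal splitting along $\overline{TV}\oplus(\overline{TV})^{\perp}$ (using only $T^{*}T=Id_{H}$) is exactly right, and your remark that surjectivity of $T$ is never used is a worthwhile observation given the paper's slightly loose use of the word ``unitary''.

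As for comparison: the paper does not supply its own proof of this lemma at all --- it is quoted as a known result from \cite{ga} and stated without argument. So there is nothing in the paper to compare your approach against; your write-up stands on its own as a self-contained proof of the cited fact.
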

\begin{lemma}(\cite{ch})\label{l3}
Let $u \in \mathcal{B}(K,H)$ be a bounded operator with closed range $R_{u}$. Then there exist a bounded  operator $u^{\dagger} \in \mathcal{B}(H,K)$ for which\\
\begin{align*}
uu^{\dagger}x=x, \ \  x \in R_{u}
\end{align*}
and $$(u^*)^\dagger=(u^{\dagger})^*.$$
\end{lemma}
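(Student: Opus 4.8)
The plan is to construct $u^{\dagger}$ explicitly from the orthogonal decompositions that the closed-range hypothesis provides, and then to obtain the adjoint identity from uniqueness. Since $R_{u}$ is closed we may write $H=R_{u}\oplus R_{u}^{\perp}$, and since $N(u)$ is automatically closed we may write $K=N(u)\oplus N(u)^{\perp}$. The restriction $u_{0}:=u|_{N(u)^{\perp}}\colon N(u)^{\perp}\to R_{u}$ is a bounded linear bijection between Hilbert spaces, so by the bounded inverse theorem it has a bounded inverse $u_{0}^{-1}\colon R_{u}\to N(u)^{\perp}$.

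I would then set $u^{\dagger}:=u_{0}^{-1}\pi_{R_{u}}$, regarded as an operator $H\to K$; it is bounded with $\Vert u^{\dagger}\Vert\le\Vert u_{0}^{-1}\Vert$. For $x\in R_{u}$ one has $\pi_{R_{u}}x=x$, hence $u^{\dagger}x=u_{0}^{-1}x\in N(u)^{\perp}$ and $uu^{\dagger}x=u_{0}u_{0}^{-1}x=x$, which is the first assertion. In the same step one records the four Moore--Penrose relations: $uu^{\dagger}=\pi_{R_{u}}$ and $u^{\dagger}u=\pi_{N(u)^{\perp}}$ are orthogonal projections, so $(uu^{\dagger})^{*}=uu^{\dagger}$ and $(u^{\dagger}u)^{*}=u^{\dagger}u$, while $uu^{\dagger}u=u$ and $u^{\dagger}uu^{\dagger}=u^{\dagger}$ follow by composing these projections with $u$ and $u^{\dagger}$.

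For $(u^{*})^{\dagger}=(u^{\dagger})^{*}$, I would first note that $R_{u^{*}}$ is closed as well (closed range theorem; concretely $R_{u^{*}}=N(u)^{\perp}$), so the same construction applies to $u^{*}$ and defines $(u^{*})^{\dagger}$. Taking adjoints in the four relations above shows that $(u^{\dagger})^{*}$ satisfies the four Penrose equations for $u^{*}$, namely $u^{*}(u^{\dagger})^{*}u^{*}=u^{*}$, $(u^{\dagger})^{*}u^{*}(u^{\dagger})^{*}=(u^{\dagger})^{*}$, $(u^{*}(u^{\dagger})^{*})^{*}=u^{*}(u^{\dagger})^{*}$, and $((u^{\dagger})^{*}u^{*})^{*}=(u^{\dagger})^{*}u^{*}$. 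Since the solution of the Penrose equations is unique --- a short classical algebraic argument showing that if $X,Y$ both satisfy them then $X=XuX=X(uX)^{*}=XX^{*}u^{*}=\cdots=XuY=\cdots=Y$ --- we get $(u^{\dagger})^{*}=(u^{*})^{\dagger}$.

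The main obstacle is this uniqueness step: one must either establish that the four Penrose equations pin the operator down, or else verify $(u^{\dagger})^{*}=(u^{*})^{\dagger}$ directly on the decompositions, checking that both sides equal $v_{0}^{-1}\pi_{N(u)^{\perp}}$ where $v_{0}=u^{*}|_{R_{u}}$ and using $(u_{0}^{-1})^{*}=(u_{0}^{*})^{-1}$. The first route is cleaner and is the one I would take; everything else reduces to bookkeeping with orthogonal projections together with the two soft inputs (the bounded inverse theorem and closedness of $R_{u^{*}}$).
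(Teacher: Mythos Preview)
The paper does not supply a proof of this lemma; it is quoted from Christensen's book \cite{ch} and used as a black box. So there is no in-paper argument to compare against.

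Your construction is the standard one and is correct. Defining $u^{\dagger}=u_{0}^{-1}\pi_{R_{u}}$ with $u_{0}=u|_{N(u)^{\perp}}$ immediately yields $uu^{\dagger}x=x$ for $x\in R_{u}$, and the four Penrose identities follow since $uu^{\dagger}=\pi_{R_{u}}$ and $u^{\dagger}u=\pi_{N(u)^{\perp}}$. For the adjoint identity, both routes you describe work: taking adjoints in the four Penrose relations and invoking uniqueness is clean, and the direct verification via $(u_{0}^{-1})^{*}=(u_{0}^{*})^{-1}$ together with $R_{u^{*}}=N(u)^{\perp}$ and $N(u^{*})=R_{u}^{\perp}$ is equally short. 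The uniqueness argument you sketch (the chain $X=XuX=\cdots=XuY=\cdots=Y$) is the standard one and presents no hidden difficulty. Nothing is missing.
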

\section{Controlled fusion frame}
\begin{definition}.\cite{Khosravi}
Let $\lbrace W_{i}\rbrace_ {i\in I}$ be a collection of closed subspace in Hilbert space $H$,  $\lbrace v_{i}\rbrace_ {i\in I}$ be a family of weights, i.e. $v_{i}>0$,  $i \in I$ and $C, C'\in GL(H)$. The sequence  $W=\lbrace (W_{i},v_{i})\rbrace _ {i \in I}$ is called a fusion frame controlled by $C$ and $C^{\prime}$ or $CC^{\prime}$-Controlled fusion frame  for $H$ if there exist constants $0< A \leq B< \infty$ such that for all $f \in H$
\begin{align*}
A \Vert f \Vert^{2} \leq \sum _{i\in I} v_{i}^{2} \langle \pi_{W_{i}} C^{\prime}f,\pi_{W_{i}} Cf \rangle  \leq B \Vert f\Vert^{2}
\end{align*}
\end{definition}
Throughout this paper, $W$ will be a set $\lbrace (W_{i},v_{i})\rbrace _ {i \in I}$ unless otherwise stated.  $W$ is called a tight controlled fusion frame, if  the constants $A,B$ can be chosen such that  $A=B$, a parseval fusion frame provided $A=B=1$. We call $W$ is a $C^2$-Controlled fusion frame  if $C=C^{\prime}$.\\ 
If only the second Inequality is required, We call $W$-Controlled Bessel fusion sequence  with bound $B$.\\
We define the controlled analysis operator by\\
\begin{align*}
&T_{W}:H \rightarrow \mathcal{K}_{2,W} \\
&T_{W}(f)=(v_{i}(C^{*}\pi_{W_{i}}C^{\prime})^{\frac{1}{2}}f).
\end{align*}
where
\begin{align*}
\mathcal{K}_{2,W}:=\lbrace v_i(C^{*}\pi_{W_{i}} C^{\prime})^{\frac{1}{2}}f \ : \ f\in H\rbrace \subset (\bigoplus_{i \in I} H)_{l^{2}}.
\end{align*}
It is easy to see that $\mathcal{K}_{2,W}$ is closed and $T_{W}$ is well defined. Moreover $T_{W}$ is a bounded linear operator with the adjoint operator $T ^{*}_{W}$ defined by
\begin{align*} 
&T^{*}_{W}:\mathcal{K}_{2,W} \rightarrow H \\
&T ^{*}_{W}(v_i(C^{*}\pi_{W_{i}} C^{\prime})^{\frac{1}{2}}f)=\sum _{i\in I} v_{i}^{2}C^{*}\pi_{W_{i}} C^{\prime}f.
\end{align*}
Therefore, we define the controlled fusion frame operator $S_{W}$ on $H$ by
\begin{align*}
S_{W}f=T ^{*}_{W}T_{W}(f)=\sum _{i\in I} v_{i}^{2}C^{*}\pi_{W_{i}} C^{\prime}f.
\end{align*}
\begin{Example}
Let $\lbrace e_{1},e_{2},e_{3}\rbrace$ be the standard orthonormal basis for $\mathcal{R}^{3}$ and $W_{1}=\overline{span}\lbrace e_{1},e_{2} \rbrace$, $W_{2}=\overline{span}\lbrace e_{1},e_{3} \rbrace$, $W_{3}=\overline{span}\lbrace e_{2},e_{3} \rbrace$. Let
$C(x_{1},x_{2},x_{3})=(x_{1},x_{2},x_{1}+x_{3})$, $C^{\prime}(x_{1},x_{2},x_{3})=(x_{1},x_{2},x_{2}+x_{3})$ be two operators on $\mathcal{R}^{3}$. It is easy to see that $C,C^{\prime}\in GL(\mathcal{R}^{3})$.  Now an easy computation shows that $\lbrace(W_{i},1)\rbrace^{3} _{i=1}$,  is a $CC'$-controlled fusion frame with bounds 1, 4. 
\begin{align*}
1 \Vert f \Vert^{2} \leq \sum _{i\in I} v_{i}^{2} \langle \pi_{W_{i}} C^{\prime}f,\pi_{W_{i}} Cf \rangle  \leq 4 \Vert f\Vert^{2}
\end{align*}
\end{Example}
\begin{theorem}\label{1}
$W$ be a  $CC'$-controlled fusion Bessel sequence  for $H$  with bound $B$ if and only if the operator
\begin{align*}
&T^{*}_{W}:\mathcal{K}_{2,W} \rightarrow H \\
&T ^{*}_{W}(v_i(C^{*}\pi_{W_{i}} C^{\prime})^{\frac{1}{2}}f)=\sum _{i\in I} v_{i}^{2}C^{*}\pi_{W_{i}} C^{\prime}f.
\end{align*}
is well -defined and bounded operator with $\Vert T^{*}_{W} \Vert \leq \sqrt{B}$.\\
\end{theorem}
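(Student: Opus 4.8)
The plan is to reduce the whole statement to the single identity
\[
\sum_{i\in I} v_i^{2}\,\langle \pi_{W_i}C'f,\pi_{W_i}Cf\rangle
=\sum_{i\in I} v_i^{2}\,\langle C^{*}\pi_{W_i}C'f,\,f\rangle
=\sum_{i\in I}\bigl\|\,v_i(C^{*}\pi_{W_i}C')^{1/2}f\,\bigr\|^{2},
\qquad f\in H,
\]
in which the first equality uses $\pi_{W_i}=\pi_{W_i}^{*}=\pi_{W_i}^{2}$ and the second uses that $(C^{*}\pi_{W_i}C')^{1/2}$ is self-adjoint. Consequently the controlled Bessel upper bound $\sum_i v_i^{2}\langle\pi_{W_i}C'f,\pi_{W_i}Cf\rangle\le B\|f\|^{2}$ says exactly that $\|T_W f\|_{(\bigoplus_{i\in I}H)_{\ell^{2}}}^{2}\le B\|f\|^{2}$, i.e. that $T_W$ is a well-defined bounded operator with $\|T_W\|\le\sqrt B$ and range $\mathcal K_{2,W}$.

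First I would treat the forward implication. The Bessel bound, via the identity, makes $T_W$ well defined with $\|T_W\|\le\sqrt B$; since $\mathcal K_{2,W}$ is closed, $T_W$ has a bounded adjoint with $\|T_W^{*}\|=\|T_W\|\le\sqrt B$, so all that remains is to verify that this adjoint is the operator written in the statement. For $h,f\in H$,
\[
\langle T_W^{*}(T_W h),f\rangle=\langle T_W h,T_W f\rangle
=\sum_{i\in I} v_i^{2}\,\langle (C^{*}\pi_{W_i}C')^{1/2}h,(C^{*}\pi_{W_i}C')^{1/2}f\rangle
=\Bigl\langle \sum_{i\in I} v_i^{2}\,C^{*}\pi_{W_i}C'h,\;f\Bigr\rangle,
\]
which simultaneously shows that $\sum_i v_i^{2}C^{*}\pi_{W_i}C'h$ converges and identifies $T_W^{*}$ on $\mathcal K_{2,W}=\mathrm{ran}\,T_W$ with the asserted formula.

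For the converse, assume the operator $T_W^{*}$ of the statement is well defined on $\mathcal K_{2,W}$ and bounded with $\|T_W^{*}\|\le\sqrt B$. I would pass to $(T_W^{*})^{*}\colon H\to\mathcal K_{2,W}$ and show $(T_W^{*})^{*}=T_W$: testing $(T_W^{*})^{*}f$ against the generic element $T_W h\in\mathcal K_{2,W}$ gives $\langle (T_W^{*})^{*}f,T_W h\rangle=\langle f,T_W^{*}(T_W h)\rangle=\langle f,\sum_i v_i^{2}C^{*}\pi_{W_i}C'h\rangle$, while $\langle T_W f,T_W h\rangle=\langle \sum_i v_i^{2}C^{*}\pi_{W_i}C'f,h\rangle$, and these agree because each $C^{*}\pi_{W_i}C'$ is self-adjoint (so $S_W$ is symmetric). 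Hence $(T_W^{*})^{*}f=T_W f$, whence $\|T_W f\|^{2}=\|(T_W^{*})^{*}f\|^{2}\le\|T_W^{*}\|^{2}\|f\|^{2}\le B\|f\|^{2}$; reading $\|T_W f\|^{2}$ back through the opening identity returns $\sum_i v_i^{2}\langle\pi_{W_i}C'f,\pi_{W_i}Cf\rangle\le B\|f\|^{2}$, i.e. $W$ is a $CC'$-controlled Bessel fusion sequence with bound $B$.

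The computations (exchanging series with inner product, self-adjointness of the square roots) are routine; the step I expect to need the most care is the \emph{well-definedness} clause of the converse — one must actually invoke the hypothesis that $T_W^{*}$ is well defined on $\mathcal K_{2,W}$, i.e. that $\sum_i v_i^{2}C^{*}\pi_{W_i}C'f$ converges in $H$ and depends only on $(v_i(C^{*}\pi_{W_i}C')^{1/2}f)_{i}$, before one may legitimately form $(T_W^{*})^{*}$ and run the adjoint argument. Everything else is bookkeeping with the displayed identity.
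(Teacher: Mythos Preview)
Your proof is correct and rests on the same core identity the paper uses, namely $\sum_i v_i^{2}\langle\pi_{W_i}C'f,\pi_{W_i}Cf\rangle=\|T_W f\|^{2}$. For the forward implication your argument simply fleshes out what the paper dismisses as ``follows from the definition.'' For the converse you take a slightly different path: you identify $(T_W^{*})^{*}$ with $T_W$ and then invoke $\|T_W\|=\|T_W^{*}\|\le\sqrt B$. The paper instead argues directly via Cauchy--Schwarz: writing $X=\sum_i v_i^{2}\langle\pi_{W_i}C'f,\pi_{W_i}Cf\rangle=\langle T_W^{*}(T_W f),f\rangle$, one gets $X^{2}\le\|T_W^{*}\|^{2}\,\|T_W f\|^{2}\,\|f\|^{2}=B\,X\,\|f\|^{2}$ and hence $X\le B\|f\|^{2}$. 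The paper's route is marginally shorter since it bypasses the explicit verification that $(T_W^{*})^{*}=T_W$ and the appeal to self-adjointness of $C^{*}\pi_{W_i}C'$; yours is more structural and makes the role of $T_W$ as the Hilbert adjoint of $T_W^{*}$ explicit. Either way the substance is the same.
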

\begin{proof}
The necessary condition follows from the definition of $CC'$-controlled fusion Bessel sequence. We only need to prove that the sufficient condition holds. Let $T^{*} _{W}$ be well-defined and bounded operator with $\Vert T^{*}_{W} \Vert \leq  \sqrt{B}$. For any $f\in H$, we have
\begin{align*}
(\sum_{i \in I} v_{i} ^{2} \langle  \pi_{W_{i}}C^{\prime}f,  \pi_{W_{i}} Cf \rangle )^{2}&=(\sum_{i\in I} v_{i} ^{2} \langle C^{*}\pi_{W_{i}}  C^{\prime}f,f \rangle )^{2}\\
&= (\langle T^{*}_{W}\big(v_i(C^{*}  \pi_{W_{i}} C')^{\frac{1}{2}}f\big), f \rangle )^2\\
&\leq \Vert T^{*}_{W}\Vert ^{2} \Vert (v_i(C^{*}\pi_{W_{i}}   C')^{\frac{1}{2}} f \Vert ^{2} \Vert f \Vert ^{2}.
\end{align*}
But
\begin{align*}
\Vert (v_i(C^{*}  \pi_{W_{i}} C')^{\frac{1}{2}} f\Vert_{2} ^{2}=\sum _{i \in I} v_{i}^{2}\langle \pi_{W_{i}}  C'f,\pi _{W_{i}} C f\rangle.
\end{align*}
It follows that
\begin{align*}
\sum _{i \in I} v_{i}^{2}\langle \pi_{W_{i}}  C'f,\pi _{W_{i}} C f\rangle\leq B\Vert f\Vert ^{2}.
\end{align*}
and this means that $W$ is a  $CC'$-controlled fusion Bessel sequence for H.
\end{proof}
\begin{theorem}\label{2}
$W$ is a $CC^{\prime}$-Controlled fusion frame  for $H$ if and only if 
\begin{align*}
&T^{*}_{W}:\mathcal{K}_{2,W} \rightarrow H \\
&T ^{*}_{W}(v_i(C^{*}\pi_{W_{i}} C^{\prime})^{\frac{1}{2}}f)=\sum _{i\in I} v_{i}^{2}C^{*}\pi_{W_{i}} C^{\prime}f.
\end{align*}
is a well-defined, bounded and surjective.
\end{theorem}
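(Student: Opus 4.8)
The plan is to prove the two implications separately, in both directions leaning on Theorem~\ref{1} to handle the Bessel/upper-bound bookkeeping and exploiting the factorization $S_W = T^*_W T_W$ together with the identity $\Vert T_W f\Vert^2 = \sum_{i\in I} v_i^2 \langle \pi_{W_i} C' f, \pi_{W_i} C f\rangle$.

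For the forward direction, suppose $W$ is a $CC'$-controlled fusion frame with bounds $A,B$. Then in particular it is a $CC'$-controlled fusion Bessel sequence with bound $B$, so Theorem~\ref{1} already gives that $T^*_W$ is well-defined and bounded, and it remains only to establish surjectivity. Here I would observe that $S_W = T^*_W T_W$ is self-adjoint and positive, since $\langle S_W f, g\rangle = \langle T_W f, T_W g\rangle$, and that $\langle S_W f, f\rangle = \Vert T_W f\Vert^2 = \sum_{i\in I} v_i^2 \langle \pi_{W_i} C' f, \pi_{W_i} C f\rangle$; thus the frame inequality says precisely $A\,Id_H \leq S_W \leq B\,Id_H$. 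Since $A>0$, the operator $S_W$ is invertible on $H$, hence surjective, and therefore $\mathrm{range}(T^*_W) \supseteq \mathrm{range}(T^*_W T_W) = \mathrm{range}(S_W) = H$, so $T^*_W$ is surjective.

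For the converse, assume $T^*_W$ is well-defined, bounded and surjective. Well-definedness and boundedness yield, by Theorem~\ref{1}, that $W$ is a $CC'$-controlled fusion Bessel sequence, i.e. the upper bound $B = \Vert T^*_W\Vert^2$ holds, so only the lower bound is at issue. Here I would invoke Lemma~\ref{l3}: since $T^*_W$ maps onto $H$, its range is all of $H$ and hence closed, so there is a bounded pseudo-inverse $(T^*_W)^\dagger : H \to \mathcal{K}_{2,W}$ with $T^*_W (T^*_W)^\dagger x = x$ for every $x \in H$. Using that $(T^*_W)^* = T_W$ (because $T_W$ is bounded), for every $f\in H$ one obtains
\[
\Vert f\Vert^2 = \langle T^*_W (T^*_W)^\dagger f, f\rangle = \langle (T^*_W)^\dagger f, T_W f\rangle \leq \Vert (T^*_W)^\dagger\Vert \,\Vert f\Vert\, \Vert T_W f\Vert ,
\]
whence $\Vert f\Vert^2 \leq \Vert (T^*_W)^\dagger\Vert^2 \Vert T_W f\Vert^2 = \Vert (T^*_W)^\dagger\Vert^2 \sum_{i\in I} v_i^2 \langle \pi_{W_i} C' f, \pi_{W_i} C f\rangle$. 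Therefore $A := \Vert (T^*_W)^\dagger\Vert^{-2}$ serves as a lower controlled fusion frame bound, and $W$ is a $CC'$-controlled fusion frame.

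The only genuinely non-formal step is this last one: passing from mere surjectivity of $T^*_W$ to a strictly positive lower bound, i.e. from "surjective" to "$T_W$ is bounded below"; Lemma~\ref{l3} is exactly the tool for that (alternatively one could argue via the open mapping theorem or the closed range theorem). Everything else is routine manipulation of the already-established correspondence between $W$, $T_W$, $T^*_W$ and $S_W$, though one should take minor care that $C^*\pi_{W_i}C'$ is positive so that $(C^*\pi_{W_i}C')^{1/2}$, and hence $T_W$, are legitimately defined, and that all the sums appearing are the same real quantity $\sum_{i\in I} v_i^2 \langle \pi_{W_i}C'f, \pi_{W_i}Cf\rangle$.
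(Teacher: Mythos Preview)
Your proposal is correct and follows essentially the same route as the paper: in the forward direction you (like the paper) deduce surjectivity of $T^*_W$ from invertibility of $S_W=T^*_W T_W$, and for the converse you invoke Theorem~\ref{1} for the upper bound and Lemma~\ref{l3} (the pseudo-inverse) for the lower bound. The only cosmetic difference is that the paper passes from $T^*_W (T^*_W)^\dagger = Id_H$ to $T_W^\dagger T_W = Id_H$ via the identity $(u^*)^\dagger=(u^\dagger)^*$ and then bounds $\Vert f\Vert\leq \Vert T_W^\dagger\Vert\,\Vert T_W f\Vert$ directly, whereas you reach the same inequality through the inner-product estimate $\langle (T^*_W)^\dagger f, T_W f\rangle$; both yield the lower bound $\Vert (T^*_W)^\dagger\Vert^{-2}=\Vert T_W^\dagger\Vert^{-2}$.
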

\begin{proof}
If $W$ is a $CC^{\prime}$-Controlled fusion frame  for $H$, the operator $S_{W}$ is invertible. Thus, $T^{*}_{W}$ is surjective. 

Conversely, let $T^{*}_{W}$ be a well-defined, bounded and surjective. Then, by Theorem \ref{1}, $W$ is a $CC^{\prime}$-Controlled Bessel fusion sequence for $H$.\\
So, $T_{W}(f)=(v_{i}(C^{*}\pi_{W_{i}}C^{\prime})^{\frac{1}{2}}f)$ for all $f \in H$. Since $T^{*}_{W}$  is Surjective, by Lemma [\ref{l3}], there exists an operator $T^{*\dagger}_{W}:H \rightarrow \mathcal{K}_{2,W}$ such that $T^{\dagger}_{W} T_{W}=I_{H}$ . Now, for each $f\in H$ we have 
\begin{align*}
\Vert f\Vert^{2}\leq \Vert T^{\dagger}_{W}\Vert ^{2}.\Vert T_{w}f\Vert ^{2}=\Vert T^{\dagger}_{W}\Vert ^{2}.\sum _{i\in I} v_{i}^{2} \langle \pi_{W_{i}} C^{\prime}f,\pi_{W_{i}} Cf \rangle ^{2}
\end{align*}
Therefore, $W$ is a $CC^{\prime}$-Controlled fusion frame  for $H$ with the lower Controlled fusion frame bound  $\Vert T^{\dagger}_{W}\Vert ^{-2}$ and the upper Controlled fusion frame $\Vert T^{*}_{W}\Vert ^{2}$.
\end{proof}
\begin{theorem}
Let $W$ be a $C^{2}$-controlled fusion frame with frame bounds $A$ and $B$. If $u \in \mathcal{B}(H)$ is an invertible  operator such that $u^*C=Cu^*$, then
$\lbrace(uW_{i},v_{i})\rbrace _{i \in I}$ is a $C^{2}$-controlled
fusion frame for $H$.
\end{theorem}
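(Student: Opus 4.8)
The plan is to squeeze the quantity $\sum_{i\in I}v_i^2\langle\pi_{uW_i}Cf,\pi_{uW_i}Cf\rangle=\sum_{i\in I}v_i^2\|\pi_{uW_i}Cf\|^2$ between two constant multiples of $\sum_{i\in I}v_i^2\|\pi_{W_i}C(u^{*}f)\|^2$ and then to feed the $C^{2}$-controlled fusion frame inequality of $W$ with the vector $u^{*}f$. Two preliminary remarks make this work: since $u$ is invertible each $uW_i$ is a closed subspace, so the orthogonal projections $\pi_{uW_i}$ are defined; and from $u^{*}C=Cu^{*}$ we get $\pi_{W_i}u^{*}Cf=\pi_{W_i}C(u^{*}f)$ for every $f\in H$ and every $i$.

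The heart of the argument is a two-sided comparison of $\pi_{uW_i}$ with $\pi_{W_i}$ transported by $u$. Fix $f$ and $i$ and put $z:=\pi_{uW_i}Cf$; since $z\in uW_i$ we may write $z=uv$ with $v:=u^{-1}z\in W_i$. The defining orthogonality $Cf-z\perp uW_i$ says $\langle u^{*}(Cf-z),w\rangle=0$ for all $w\in W_i$, i.e. $\pi_{W_i}u^{*}Cf=\pi_{W_i}u^{*}z$. Now $\|z\|^{2}=\|uv\|^{2}=\langle u^{*}uv,v\rangle=\langle u^{*}z,v\rangle=\langle\pi_{W_i}u^{*}z,v\rangle$ (the last step since $v\in W_i$ and $\pi_{W_i}$ is self-adjoint), so Cauchy--Schwarz together with $\|v\|\le\|u^{-1}\|\,\|z\|$ gives $\|z\|\le\|u^{-1}\|\,\|\pi_{W_i}u^{*}Cf\|$; conversely $\|\pi_{W_i}u^{*}Cf\|=\|\pi_{W_i}u^{*}z\|\le\|u\|\,\|z\|$. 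Rewriting $\pi_{W_i}u^{*}Cf=\pi_{W_i}C(u^{*}f)$ via the commutation hypothesis, this yields, for every $f$,
\[
\frac{1}{\|u\|^{2}}\,\|\pi_{W_i}C(u^{*}f)\|^{2}\ \le\ \|\pi_{uW_i}Cf\|^{2}\ \le\ \|u^{-1}\|^{2}\,\|\pi_{W_i}C(u^{*}f)\|^{2}.
\]

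Multiplying by $v_i^{2}$ and summing over $i$, then applying the $C^{2}$-controlled fusion frame bounds $A,B$ of $W$ to the vector $u^{*}f$, and finally using $\|u^{-1}\|^{-2}\|f\|^{2}\le\|u^{*}f\|^{2}\le\|u\|^{2}\|f\|^{2}$, we arrive at
\[
\frac{A}{\|u\|^{2}\|u^{-1}\|^{2}}\,\|f\|^{2}\ \le\ \sum_{i\in I}v_{i}^{2}\,\langle\pi_{uW_i}Cf,\pi_{uW_i}Cf\rangle\ \le\ B\,\|u\|^{2}\|u^{-1}\|^{2}\,\|f\|^{2},
\]
which is precisely the $C^{2}$-controlled fusion frame condition for $\{(uW_i,v_i)\}_{i\in I}$ (recall $C'=C$), with explicit bounds $A'=A/(\|u\|^{2}\|u^{-1}\|^{2})$ and $B'=B\|u\|^{2}\|u^{-1}\|^{2}$.

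I expect the one delicate point to be the comparison step: one must resist writing $\pi_{uW_i}=u\pi_{W_i}u^{-1}$, since that operator is only an oblique projection onto $uW_i$, so the estimate has to be routed through the orthogonality identity $\pi_{W_i}u^{*}(Cf-\pi_{uW_i}Cf)=0$ as above; everything else is routine tracking of constants, and the hypothesis $u^{*}C=Cu^{*}$ enters only to convert $\pi_{W_i}u^{*}C$ into $\pi_{W_i}Cu^{*}$. A shorter alternative, if one is willing to leave the controlled setting momentarily, is to observe that because $C\in GL(H)$ a family is a $C^{2}$-controlled fusion frame precisely when it is an ordinary fusion frame, and then to invoke the standard fact that invertible operators send fusion frames to fusion frames.
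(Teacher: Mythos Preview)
Your proof is correct and follows essentially the same route as the paper: compare $\|\pi_{uW_i}Cf\|$ with $\|\pi_{W_i}u^{*}Cf\|=\|\pi_{W_i}C(u^{*}f)\|$, apply the $C^{2}$-controlled fusion frame inequality of $W$ to $u^{*}f$, and then bound $\|u^{*}f\|$ above and below by multiples of $\|f\|$; the resulting bounds $A/(\|u\|^{2}\|u^{-1}\|^{2})$ and $B\|u\|^{2}\|u^{-1}\|^{2}$ coincide with the paper's. The only difference is in how the projection comparison is justified: the paper invokes the G\u{a}vru\c{t}a identity $\pi_{V}T^{*}=\pi_{V}T^{*}\pi_{\overline{TV}}$ (Lemma~\ref{l1}) twice (once with $T=u$, $V=W_i$, and once with $T=u^{-1}$, $V=uW_i$) to obtain $\|\pi_{W_i}u^{*}Cf\|\le\|u\|\,\|\pi_{uW_i}Cf\|$ and $\|\pi_{uW_i}Cf\|\le\|u^{-1}\|\,\|\pi_{W_i}u^{*}Cf\|$, whereas you re-derive the same two inequalities from scratch via the orthogonality characterization $\pi_{W_i}u^{*}(Cf-\pi_{uW_i}Cf)=0$ together with Cauchy--Schwarz. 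Your argument is thus a self-contained proof of the special cases of Lemma~\ref{l1} that are needed here, which makes the write-up independent of that citation; otherwise the two proofs are identical in structure.
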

\begin{proof}
Let $f\in H$. From lemma \ref{l1}, we have
\begin{align*}
\Vert\pi_{W_i}Cu^*f\Vert=\Vert\pi_{W_i}u^*Cf\Vert=\Vert\pi_{W_i}u^*\pi_{uW_i}C^*f\Vert\leq
\Vert u\Vert \Vert \pi_{uW_i}C^*f\Vert.
\end{align*}
Therefore,
\begin{align*}
A\Vert u^*f\Vert^2\leq\sum_{i\in I}\Vert\pi_{W_i}Cu^*f\Vert^2\leq\Vert u\Vert^2\sum_{i\in I} \Vert \pi_{uW_i}C^*f\Vert.
\end{align*}
But,
\begin{align*}
\Vert f\Vert^2\leq\Vert (u^{-1})^{*}u^*f\Vert^2\leq\Vert u^{-1}\Vert^2\Vert u^*f\Vert^2.
\end{align*}
Then,
\begin{align*}
A\Vert u^{-1}\Vert^{-2}\Vert u\Vert^{-2}\Vert f\Vert^2\leq\sum_{i\in I} \Vert \pi_{uW_i}C^*f\Vert.
\end{align*}
On the other hand, from lemma \ref{l1}, we obtain, with $u^{-1}$ instead of $T$:
$$\pi_{uW_i}=\pi_{uW_i}(u^*)^{-1}\pi_{W_i}u^*.$$
Thus,
$$\Vert\pi_{uW_i}Cf\Vert\leq\Vert u^{-1}\Vert \Vert\pi_{W_j}u^*Cf\Vert,$$
and it follows 
\begin{align*}
\sum _{i \in I} v_{i}^{2} \Vert  \pi _{uW_{i}}Cf\Vert ^{2}&\leq \Vert u^{-1}\Vert^{2}\sum _{i \in I} v_{i}^{2}\Vert
\pi _{W_{i}}u^{*}Cf\Vert ^{2}\\
&=\Vert u^{-1}\Vert^2\sum_{i\in I}v_i^2\Vert \pi_{W_i}C u^{*}f\Vert^2\\
&\leq B \Vert  u^{-1}\Vert^{2}\Vert  u \Vert^{2} \Vert f\Vert^{2}.
\end{align*}
\end{proof}
\begin{theorem}
Let $W=\lbrace(W_{i},v_{i})\rbrace _{i \in I}$ be a $C^{2}$-controlled
fusion frame with frame bounds $A$ and $B$. If $u \in \mathcal{B}(H)$is an
invertible and unitary operator such that $uC=Cu$, then
$\lbrace(uW_{i},v_{i})\rbrace _{i \in I}$ is a $C^{2}$-controlled
fusion frame for $H$.
\end{theorem}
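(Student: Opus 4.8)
The plan is to transport the frame inequality for $W$ to $\{(uW_i,v_i)\}_{i\in I}$ by the change of variable $f\mapsto u^{-1}f$, exploiting that $u$ is unitary (hence norm-preserving) and that $u$ commutes with $C$. First I would fix $f\in H$ and note that, since $u\in GL(H)$, each $uW_i$ is a closed subspace, so $\overline{uW_i}=uW_i$ and $\pi_{uW_i}$ is well defined. From $uC=Cu$ one immediately gets $Cu^{-1}=u^{-1}C$, and hence $C=uCu^{-1}$.

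Next I would apply the unitary part of Lemma \ref{l1} with $T=u$: since $u^{*}u=Id_{H}$ we have $\pi_{\overline{uW_i}}\,u=u\,\pi_{W_i}$, i.e. $\pi_{uW_i}\,u=u\,\pi_{W_i}$. Combining this with $C=uCu^{-1}$ yields
\begin{align*}
\pi_{uW_i}Cf=\pi_{uW_i}\,u\,(Cu^{-1}f)=u\,\pi_{W_i}\,Cu^{-1}f ,
\end{align*}
and, $u$ being unitary, $\Vert\pi_{uW_i}Cf\Vert=\Vert\pi_{W_i}Cu^{-1}f\Vert$ for every $i\in I$.

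Summing over $i$ with the weights $v_i^{2}$, and recalling that (since $C=C'$) the $C^{2}$-controlled fusion frame condition for $W$ reads $A\Vert g\Vert^{2}\le\sum_{i\in I}v_i^{2}\Vert\pi_{W_i}Cg\Vert^{2}\le B\Vert g\Vert^{2}$, I would apply this inequality at $g=u^{-1}f$ to obtain
\begin{align*}
A\Vert u^{-1}f\Vert^{2}\le\sum_{i\in I}v_i^{2}\langle\pi_{uW_i}Cf,\pi_{uW_i}Cf\rangle\le B\Vert u^{-1}f\Vert^{2}.
\end{align*}
Finally, $u^{-1}=u^{*}$ is again unitary, so $\Vert u^{-1}f\Vert=\Vert f\Vert$, and we conclude that $\{(uW_i,v_i)\}_{i\in I}$ is a $C^{2}$-controlled fusion frame for $H$ with the same bounds $A$ and $B$.

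I do not expect a genuine obstacle here; the only points needing care are verifying that $uW_i$ is closed so that Lemma \ref{l1} applies with $\overline{uW_i}=uW_i$, and using the commutation relation in the form $C=uCu^{-1}$ so that the factor $u$ peeled off by Lemma \ref{l1} lands \emph{outside} $\pi_{W_i}$, where unitarity can absorb it. In contrast to the previous theorem, where $u$ was merely invertible and the estimates picked up the constants $\Vert u\Vert^{\pm2}$ and $\Vert u^{-1}\Vert^{\pm2}$, here unitarity forces all these norm factors to equal $1$, which is precisely why the frame bounds are preserved exactly.
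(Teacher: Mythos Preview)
Your proof is correct and follows essentially the same route as the paper: invoke the unitary clause of Lemma~\ref{l1} to obtain $\pi_{uW_i}u=u\,\pi_{W_i}$, use the commutation $uC=Cu$ to pass to $g=u^{-1}f$, and apply the original $C^{2}$-controlled frame inequality there. The only difference is that the paper carries the factors $\Vert u\Vert^{2}$ and $\Vert u^{-1}\Vert^{2}$ through the estimates and ends with bounds $A\Vert u\Vert^{-2}\Vert u^{-1}\Vert^{-2}$ and $B\Vert u\Vert^{2}\Vert u^{-1}\Vert^{2}$, whereas you exploit unitarity directly (so $\Vert u\,\cdot\Vert=\Vert\cdot\Vert$ and $\Vert u^{-1}f\Vert=\Vert f\Vert$) and recover the sharp bounds $A$ and $B$.
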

\begin{proof}
Using lemma \ref{l1}, we have foe any $f\in H$,
\begin{align*}
A\Vert f\Vert ^{2}&\leq\Vert u\Vert^2 \Vert u^{-1}f\Vert^2\\
&\leq\Vert u\Vert^2\sum _{i \in I} v_{i}^{2}\Vert \pi _{W_{i}}
u^{-1}Cf\Vert ^{2}\\
&\leq\Vert u\Vert^{2} \sum _{i \in I} v_{i}^{2}\Vert u^{-1}\pi
_{uW_{i}}Cf\Vert ^{2}\\
&\leq\Vert u\Vert^2 \Vert u^{-1}\Vert^2\sum_{i\in I}v_i^2\Vert
\pi_{uW_i}Cf\Vert^2,
\end{align*}
and we obtain
\begin{align*}
 \sum _{i \in I} v_{i}^{2}\Vert \pi _{uW_{i}}Cf\Vert ^{2}\geq\dfrac{A}{\Vert u \Vert^{2}\Vert u^{-1}\Vert^{2}}\Vert
 f\Vert^{2}.
\end{align*}
On the other hand, from lemma \ref{l1}, we obtain
\begin{align*}
\sum _{i \in I} v_{i}^{2} \Vert  \pi _{uW_{i}}Cf\Vert ^{2}&\leq \Vert u\Vert^{2}\sum _{i \in I} v_{i}^{2}\Vert
\pi _{W_{i}}u^{-1}Cf\Vert ^{2}\\
&=\Vert u\Vert^2\sum_{i\in I}v_i^2\Vert \pi_{W_i}C u^{-1}f\Vert^2\\
&\leq B \Vert  u^{-1}\Vert^{2}\Vert  u \Vert^{2} \Vert f\Vert^{2}.
\end{align*}
\end{proof}
\begin{theorem}
Let  $W=\lbrace(W_{i},v_{i})\rbrace _{i \in I}$ and $Z=\lbrace(Z_{i},v_{i})\rbrace _{i \in I}$  be $CC'$-Controlled Bessel fusion sequences for $H$. Suppose that there exists $0<\epsilon< 1$ such that 
\begin{align*}
\Vert f-T^{*}_{Z}T_ {W}f \Vert \leq \epsilon \Vert f \Vert
\end{align*}
Then $W$ and $Z$  are $CC'$-Controlled fusion frame for $H$.
\end{theorem}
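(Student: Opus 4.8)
The plan is to read the hypothesis $\Vert f-T^{*}_{Z}T_{W}f\Vert\le\epsilon\Vert f\Vert$ as saying that $T^{*}_{Z}T_{W}$ is a small perturbation of $Id_{H}$, extract lower bounds for the controlled analysis operators $T_{W}$ and $T_{Z}$ from it, and take the upper bounds for granted since $W$ and $Z$ are already $CC'$-Controlled Bessel fusion sequences. Write $B_{W},B_{Z}$ for their Bessel bounds. By Theorem \ref{1} the operators $T^{*}_{W},T^{*}_{Z}$ are bounded with $\Vert T^{*}_{W}\Vert\le\sqrt{B_{W}}$ and $\Vert T^{*}_{Z}\Vert\le\sqrt{B_{Z}}$, and $T_{W},T_{Z}$ are bounded as well, so $T^{*}_{Z}T_{W}\in\mathcal{B}(H)$ and everything below is legitimate.

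First I would treat $W$. By the reverse triangle inequality, $\Vert T^{*}_{Z}T_{W}f\Vert\ge\Vert f\Vert-\Vert f-T^{*}_{Z}T_{W}f\Vert\ge(1-\epsilon)\Vert f\Vert$ for every $f\in H$; since $\epsilon<1$ this forces $T^{*}_{Z}\ne 0$, and then $(1-\epsilon)\Vert f\Vert\le\Vert T^{*}_{Z}\Vert\,\Vert T_{W}f\Vert$ gives $\Vert T_{W}f\Vert^{2}\ge(1-\epsilon)^{2}\Vert T^{*}_{Z}\Vert^{-2}\Vert f\Vert^{2}\ge\frac{(1-\epsilon)^{2}}{B_{Z}}\Vert f\Vert^{2}$. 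Next I would rewrite $\Vert T_{W}f\Vert^{2}$: since $(C^{*}\pi_{W_{i}}C^{\prime})^{1/2}$ is selfadjoint, $\Vert T_{W}f\Vert^{2}=\sum_{i\in I}v_{i}^{2}\langle C^{*}\pi_{W_{i}}C^{\prime}f,f\rangle=\sum_{i\in I}v_{i}^{2}\langle\pi_{W_{i}}C^{\prime}f,\pi_{W_{i}}Cf\rangle$. Combined with the Bessel bound $B_{W}$ of $W$, this yields the two-sided estimate exhibiting $W$ as a $CC^{\prime}$-Controlled fusion frame with lower bound $(1-\epsilon)^{2}/B_{Z}$ and upper bound $B_{W}$.

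Then I would obtain $Z$ by symmetry via adjoints. Viewing $T_{W},T_{Z}$ as bounded operators $H\to(\bigoplus_{i\in I}H)_{l^{2}}$ with adjoints $T^{*}_{W},T^{*}_{Z}$, one has $(T^{*}_{Z}T_{W})^{*}=T^{*}_{W}T_{Z}$, hence $\Vert Id_{H}-T^{*}_{W}T_{Z}\Vert=\Vert Id_{H}-T^{*}_{Z}T_{W}\Vert\le\epsilon$, that is $\Vert f-T^{*}_{W}T_{Z}f\Vert\le\epsilon\Vert f\Vert$ for all $f$. Re-running the previous paragraph with the roles of $W$ and $Z$ interchanged shows $Z$ is a $CC^{\prime}$-Controlled fusion frame with lower bound $(1-\epsilon)^{2}/B_{W}$ and upper bound $B_{Z}$.

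Apart from the routine inequalities, the two points to watch are the identity $\Vert T_{W}f\Vert^{2}=\sum_{i}v_{i}^{2}\langle\pi_{W_{i}}C^{\prime}f,\pi_{W_{i}}Cf\rangle$, which relies on treating $(C^{*}\pi_{W_{i}}C^{\prime})^{1/2}$ as a selfadjoint square root (harmless here, and automatic when $C=C^{\prime}$), and the adjoint relation $(T^{*}_{Z}T_{W})^{*}=T^{*}_{W}T_{Z}$. The latter is what lets the hypothesis, stated only for the ordered pair $(Z,W)$, also do the work for $(W,Z)$; making this symmetrization precise is the main obstacle, and everything else is bookkeeping.
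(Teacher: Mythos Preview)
Your argument is correct and follows essentially the same route as the paper: the reverse triangle inequality gives $(1-\epsilon)\Vert f\Vert\le\Vert T^{*}_{Z}T_{W}f\Vert$, and then bounding $\Vert T^{*}_{Z}T_{W}f\Vert$ by $\Vert T^{*}_{Z}\Vert\,\Vert T_{W}f\Vert$ (the paper does this via the duality form $\sup_{\Vert g\Vert=1}\vert\langle T_{W}f,T_{Z}g\rangle\vert$, which is the same estimate) yields the lower frame bound for $W$. Your explicit use of $(T^{*}_{Z}T_{W})^{*}=T^{*}_{W}T_{Z}$ to justify the ``similarly for $Z$'' step is a welcome clarification that the paper leaves implicit.
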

\begin{proof}
For each $f\in H$, we have 
\begin{align*}
\Vert T^{*}_{Z}T_ {W}f\Vert\geq\Vert f\Vert-\Vert f-T^{*}_{Z}T_ {W} f \Vert\geq (1- \epsilon)\Vert f \Vert.
\end{align*}
Therefore
\begin{align*}
(1- \epsilon)\Vert f \Vert\leq\Vert T^{*}_{Z}T_ {W}f\Vert &=\sup _ {\Vert g\Vert =1}\vert \langle T^{*}_{Z}T_ {W}f,g \rangle\vert\\
&=\sup _ {\Vert g\Vert =1}\vert \langle T_ {W}f,T_{Z}g \rangle\vert\\
&\leq \sup _ {\Vert g\Vert =1} \Vert T_ {W}f\Vert .\Vert T_{Z}g\Vert\\
&\leq \sqrt{B}(\sum _{i\in I} v_{i}^{2} \langle \pi_{W_{i}} C^{\prime}f,\pi_{W_{i}} Cf \rangle)^{\frac{1}{2}},
\end{align*}
where $B$ is a Controlled Bessel bound for $W$. Hence,
\begin{align*}
\dfrac{(1- \epsilon)^{2}}{B}.\Vert f\Vert^{2}\leq(\sum _{i\in I} v_{i}^{2} \langle \pi_{W_{i}} C^{\prime}f,\pi_{W_{i}} Cf \rangle).
\end{align*}
Therefore, $W$ is a $CC'$-Controlled fusion frame for $H$. Similarly, we can show that $Z$ is also a   $CC'$-Controlled fusion frame  for $H$.
\end{proof}
\begin{corollary}
Let $W:=\lbrace(W_{i},v_{i})\rbrace _{i \in I}$ and $Z:=\lbrace(Z_{i},z_{i})\rbrace _{i \in I}$ be two $CC'$-controlled fusion Bessel sequence for $H$ with bounds $B_{1}$ and $B_{2}$, respectively. Suppose that $T^{*}_{W}$ and $T^{*}_{Z}$ be their controlled analysis operators such that $T^{*}_{Z}T_{W}=Id_{H}$. Then, both $W$  and $Z$ are $CC'$-controlled fusion frame  for $H$.\\
\end{corollary}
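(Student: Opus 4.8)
The plan is to read this off the preceding perturbation theorem, with the direct computation kept in reserve as an equivalent route. Since $T^{*}_{Z}T_{W}=Id_{H}$, we have $\Vert f-T^{*}_{Z}T_{W}f\Vert=0\le \tfrac12\Vert f\Vert$ for every $f\in H$, so the hypothesis of that theorem holds with $\epsilon=\tfrac12\in(0,1)$; its conclusion is exactly that $W$ and $Z$ are $CC'$-controlled fusion frames for $H$. This is really all that is needed, and it is why the statement is phrased as a corollary.

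If one prefers an argument in place, I would unwind the same estimate used in the perturbation theorem. For $f\in H$, use $T^{*}_{Z}T_{W}f=f$ and the standard duality for the norm to write
\begin{align*}
\Vert f\Vert=\Vert T^{*}_{Z}T_{W}f\Vert=\sup_{\Vert g\Vert=1}\vert\langle T_{W}f,T_{Z}g\rangle\vert\le \sup_{\Vert g\Vert=1}\Vert T_{W}f\Vert\,\Vert T_{Z}g\Vert\le \sqrt{B_{2}}\,\Vert T_{W}f\Vert,
\end{align*}
where the last inequality is the Bessel bound for $Z$ (equivalently $\Vert T^{*}_{Z}\Vert\le\sqrt{B_{2}}$ from Theorem \ref{1}, so $\Vert T_{Z}\Vert\le\sqrt{B_{2}}$). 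Since $\Vert T_{W}f\Vert^{2}=\sum_{i\in I}v_{i}^{2}\langle\pi_{W_{i}}C'f,\pi_{W_{i}}Cf\rangle$, squaring gives $\frac{1}{B_{2}}\Vert f\Vert^{2}\le\sum_{i\in I}v_{i}^{2}\langle\pi_{W_{i}}C'f,\pi_{W_{i}}Cf\rangle$, which is the lower $CC'$-controlled fusion frame inequality for $W$; the upper inequality is just the given Bessel bound $B_{1}$. Hence $W$ is a $CC'$-controlled fusion frame with bounds $1/B_{2}$ and $B_{1}$.

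For $Z$ I would exploit symmetry rather than assume a second hypothesis: taking adjoints in $T^{*}_{Z}T_{W}=Id_{H}$ yields $T^{*}_{W}T_{Z}=(T^{*}_{Z}T_{W})^{*}=Id_{H}$, so the computation above applies verbatim with the roles of $W$ and $Z$ exchanged, giving $Z$ the bounds $1/B_{1}$ and $B_{2}$. I do not anticipate any genuine obstacle here; the only point that deserves an explicit line is this adjoint identity $T^{*}_{W}T_{Z}=Id_{H}$, which is what makes the statement symmetric in $W$ and $Z$.
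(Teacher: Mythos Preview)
Your proposal is correct and matches the paper's intended approach: the corollary is stated without proof precisely because it drops out of the preceding theorem by taking $\epsilon$ to be any number in $(0,1)$ once $T^{*}_{Z}T_{W}=Id_{H}$ forces $\Vert f-T^{*}_{Z}T_{W}f\Vert=0$. Your unwound direct computation is exactly the proof of that theorem specialized to this case, and your adjoint observation $T^{*}_{W}T_{Z}=(T^{*}_{Z}T_{W})^{*}=Id_{H}$ is the right way to make the symmetry explicit.
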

\begin{theorem}
Let $W$ be a $CC'$-controlled fusion frame with bounds $A,B$ for $H$. Also, let $Z:=\lbrace Z_{i}\rbrace _{i \in I}$ be a family of closed subspaces in $H$ and
\begin{align*}
\Vert v_{i}(C^{*}\pi_{W_{i}} C^{\prime}-C^{*}\pi_{Z_{i}} C^{\prime})^{\frac{1}{2}}f \Vert \leq \epsilon \Vert f\Vert,
\end{align*}
for some $0<\epsilon<\sqrt{A}$. Then $Z:=\lbrace(Z_{i},v_{i})\rbrace _{i \in I}$ is a  $CC'$-controlled fusion frame with bounds $(A-\epsilon^{2})$ and $(B+\epsilon^{2})$.
\end{theorem}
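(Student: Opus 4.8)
The plan is to compare, for a fixed $f\in H$, the two expressions
\[
\Phi_W(f):=\sum_{i\in I}v_i^2\langle\pi_{W_i}C'f,\pi_{W_i}Cf\rangle,\qquad
\Phi_Z(f):=\sum_{i\in I}v_i^2\langle\pi_{Z_i}C'f,\pi_{Z_i}Cf\rangle,
\]
and to control their difference directly from the perturbation hypothesis. Since $\pi_V$ is an orthogonal projection, $\langle\pi_V C'f,Cf\rangle=\langle\pi_V C'f,\pi_V Cf\rangle$ for every closed subspace $V$, so
\[
\Phi_W(f)-\Phi_Z(f)=\sum_{i\in I}v_i^2\langle(C^{*}\pi_{W_i}C'-C^{*}\pi_{Z_i}C')f,f\rangle
=\sum_{i\in I}\Vert v_i(C^{*}\pi_{W_i}C'-C^{*}\pi_{Z_i}C')^{\frac{1}{2}}f\Vert^2 .
\]
Reading the left-hand side of the hypothesis as the $\ell^2$-norm of the sequence it displays (consistent with the notation used for $T_W$), the assumption $\Vert v_i(C^{*}\pi_{W_i}C'-C^{*}\pi_{Z_i}C')^{\frac{1}{2}}f\Vert\le\epsilon\Vert f\Vert$ becomes exactly $|\Phi_W(f)-\Phi_Z(f)|\le\epsilon^2\Vert f\Vert^2$; in particular the series defining $\Phi_Z(f)$ converges, since $\Phi_W(f)\le B\Vert f\Vert^2$ by Bessel-ness of $W$.

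Granting this, the two controlled fusion frame inequalities for $Z$ each follow by a single triangle-inequality estimate. For the upper bound,
\[
\Phi_Z(f)\le\Phi_W(f)+|\Phi_W(f)-\Phi_Z(f)|\le B\Vert f\Vert^2+\epsilon^2\Vert f\Vert^2=(B+\epsilon^2)\Vert f\Vert^2,
\]
and for the lower bound,
\[
\Phi_Z(f)\ge\Phi_W(f)-|\Phi_W(f)-\Phi_Z(f)|\ge A\Vert f\Vert^2-\epsilon^2\Vert f\Vert^2=(A-\epsilon^2)\Vert f\Vert^2 .
\]
The hypothesis $0<\epsilon<\sqrt{A}$ is exactly what makes $A-\epsilon^2>0$, so $(A-\epsilon^2,\,B+\epsilon^2)$ is an admissible pair of bounds and $Z=\{(Z_i,v_i)\}_{i\in I}$ is a $CC'$-controlled fusion frame for $H$.

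The step that needs the most care is the first identity: writing $\Phi_W(f)-\Phi_Z(f)$ as a sum of squared norms $\sum_i\Vert v_i(C^{*}\pi_{W_i}C'-C^{*}\pi_{Z_i}C')^{\frac{1}{2}}f\Vert^2$ presupposes that each scalar $\langle(C^{*}\pi_{W_i}C'-C^{*}\pi_{Z_i}C')f,f\rangle$ is a nonnegative real number, so that the square-root notation is meaningful. This is the same implicit positivity/reality convention already built into the definition of a controlled fusion frame and into Theorem \ref{1}, and it is automatic, for instance, under the $C^2$-controlled normalization $C=C'$ or when $C,C'$ commute suitably with the projections; I would flag it explicitly rather than leave it silent. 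Once that convention is in place, the remaining argument uses nothing beyond the definition of a $CC'$-controlled fusion frame, with no appeal to the analysis/synthesis operator machinery.
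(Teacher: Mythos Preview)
Your argument is correct and is essentially the same as the paper's: both reduce to the identity $\sum_i v_i^2\langle\pi_{W_i}C'f,\pi_{W_i}Cf\rangle-\sum_i v_i^2\langle\pi_{Z_i}C'f,\pi_{Z_i}Cf\rangle=\sum_i\Vert v_i(C^{*}\pi_{W_i}C'-C^{*}\pi_{Z_i}C')^{1/2}f\Vert^2$ and then bound each side using the hypothesis and the frame bounds of $W$. Your version is slightly cleaner in that you state this as an identity (the paper writes one direction as an inequality where equality actually holds) and you explicitly flag the positivity convention needed for the square root, which the paper leaves silent.
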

\begin{proof}
For every  $f \in H$, we can write
\begin{align*}
\Vert v_{i}(C^{*}\pi_{Z_{i}} C^{\prime})^{\frac{1}{2}}f \Vert^{2}&\leq\Vert v_{i}(C^{*}\pi_{W_{i}} C^{\prime})^{\frac{1}{2}}f \Vert^{2}+\Vert v_{i}(C^{*}\pi_{W_{i}} C^{\prime}-C^{*}\pi_{Z_{i}} C^{\prime})^{\frac{1}{2}}f \Vert^{2}\\
&\leq(B+\epsilon^{2})\Vert f\Vert^{2}
\end{align*}
Thus,
\begin{align*}
\sum _{i\in I} v_{i}^{2} \langle \pi_{Z_{i}} C^{\prime}f,\pi_{Z_{i}} Cf \rangle =\Vert v_{i}(C^{*}\pi_{Z_{i}} C^{\prime})^{\frac{1}{2}}f \Vert^{2}\leq(B+\epsilon^{2})\Vert f\Vert^{2}.
\end{align*}
Therefore, $Z:=\lbrace(Z_{i},z_{i})\rbrace _{i \in I}$ is a Controlled Bessel fusion sequence. On the other hand
\begin{align*}
\Vert v_{i}(C^{*}\pi_{Z_{i}} C^{\prime})^{\frac{1}{2}}f \Vert^{2}&\geq\Vert v_{i}(C^{*}\pi_{W_{i}} C^{\prime})^{\frac{1}{2}}f \Vert^{2}-\Vert v_{i}(C^{*}\pi_{W_{i}} C^{\prime}-C^{*}\pi_{Z_{i}} C^{\prime})^{\frac{1}{2}}f \Vert^{2}\\
&\geq(A-\epsilon^{2})\Vert f\Vert^{2}.
\end{align*}
Hence,
\begin{align*}
\sum _{i\in I} v_{i}^{2} \langle \pi_{Z_{i}} C^{\prime}f,\pi_{Z_{i}} Cf \rangle =\Vert v_{i}(C^{*}\pi_{Z_{i}} C^{\prime})^{\frac{1}{2}}f \Vert^{2}\geq(A-\epsilon^{2})\Vert f\Vert^{2}
\end{align*}
and the proof is completed.
\end{proof}
\section{ Q-dual and perturbation  on Controlled fusion frame}
\begin{definition}
Assume that $W$ be a $CC^{\prime}$-Controlled fusion frame  for H. We call a  $CC'$-controlled fusion Bessel sequence  as $\tilde{W}:=\{(\tilde{W}_i, z_i)\}_{i\in I}$ the Q-dual $CC'$-Controlled fusion frame  of  $W$,  if there exists a bounded linear operator $Q:\mathcal{K}_{2,W}\longrightarrow \mathcal{K}_{2,\tilde{W}}$ such that
\begin{align*}
T^{*}_{W}QT_{\tilde{W}}=I_{H}.
\end{align*}
\end{definition}
\begin{theorem}
Let  $\tilde{W}$ be Q-dual $CC'$-Controlled fusion frame for $W$  and  $Q:\mathcal{K}_{2,W}\longrightarrow \mathcal{K}_{2,\tilde{W}}$. Then, the following conditions are equivalent.
\begin{enumerate}
\item $T^{*}_{\tilde{W}}Q^{*} T_ {W}=I_{H}$;
\item $T^{*}_ {W}QT_{\tilde{W}}=I_{H}$;
\item $\langle f,g \rangle = \langle Q^{*} T_ {W}f,T_{\tilde{W}}g \rangle=\langle Q T_{\tilde{W}}f,T_ {W}g \rangle$.
\end{enumerate}
\end{theorem}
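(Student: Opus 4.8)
The plan is to establish the cycle of implications $(1)\Rightarrow(2)\Rightarrow(3)\Rightarrow(1)$, the whole argument resting on one elementary fact recorded in Section~3: $T^{*}_{W}$ (resp. $T^{*}_{\tilde{W}}$) is the Hilbert space adjoint of the bounded operator $T_{W}:H\to\mathcal{K}_{2,W}$ (resp. $T_{\tilde{W}}:H\to\mathcal{K}_{2,\tilde{W}}$), so that
\begin{align*}
\langle T^{*}_{W}x,g\rangle=\langle x,T_{W}g\rangle,\qquad \langle T^{*}_{\tilde{W}}y,g\rangle=\langle y,T_{\tilde{W}}g\rangle,
\end{align*}
for all $x\in\mathcal{K}_{2,W}$, $y\in\mathcal{K}_{2,\tilde{W}}$ and $g\in H$, together with the existence of the adjoint $Q^{*}:\mathcal{K}_{2,\tilde{W}}\to\mathcal{K}_{2,W}$ of the bounded operator $Q$.

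For $(1)\Rightarrow(2)$ I would simply take Hilbert space adjoints of the operator identity $T^{*}_{\tilde{W}}Q^{*}T_{W}=I_{H}$: using $(AB)^{*}=B^{*}A^{*}$, $(T^{*}_{W})^{*}=T_{W}$, $(Q^{*})^{*}=Q$, $(T^{*}_{\tilde{W}})^{*}=T_{\tilde{W}}$ and $I_{H}^{*}=I_{H}$, one gets $T^{*}_{W}QT_{\tilde{W}}=I_{H}$, which is $(2)$. The reverse passage is identical, so in fact $(1)$ and $(2)$ are equivalent merely by taking adjoints; I will record this, since it is what makes the cycle close painlessly.

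For $(2)\Rightarrow(3)$: applying the adjoint relations above, for all $f,g\in H$,
\begin{align*}
\langle QT_{\tilde{W}}f,T_{W}g\rangle=\langle T^{*}_{W}QT_{\tilde{W}}f,g\rangle=\langle f,g\rangle,
\end{align*}
which is the second equality in $(3)$; and since $(2)$ implies $(1)$ by the previous paragraph,
\begin{align*}
\langle Q^{*}T_{W}f,T_{\tilde{W}}g\rangle=\langle T^{*}_{\tilde{W}}Q^{*}T_{W}f,g\rangle=\langle f,g\rangle,
\end{align*}
which is the first equality. Hence $(3)$ holds. For $(3)\Rightarrow(1)$ I would read the first equality of $(3)$ backwards through the adjoint relation: $\langle T^{*}_{\tilde{W}}Q^{*}T_{W}f,g\rangle=\langle Q^{*}T_{W}f,T_{\tilde{W}}g\rangle=\langle f,g\rangle$ for all $f,g\in H$, whence $T^{*}_{\tilde{W}}Q^{*}T_{W}=I_{H}$ by non-degeneracy of the inner product, completing the cycle.

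I do not expect a genuine obstacle here; the only points deserving a word of care are that the map denoted $T^{*}_{W}$ really is the adjoint of $T_{W}$ (so that the displayed inner-product identities are legitimate) and that $Q$, being bounded between the Hilbert spaces $\mathcal{K}_{2,W}$ and $\mathcal{K}_{2,\tilde{W}}$, has a genuine adjoint $Q^{*}$, so that $(1)$ is even meaningful. Everything else is formal manipulation of adjoints.
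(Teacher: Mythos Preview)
Your proof is correct and is essentially the same as the paper's; the paper simply writes ``Straightforward'' without details, and your argument spells out exactly the adjoint manipulations that justify that word.
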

\begin{proof}
Straightforward.
\end{proof}
\begin{theorem}
If  $\tilde{W}$ is a Q-dual for $W$, then  $\tilde{W}$ is a $CC'$-controlled fusion frame  for H.
\end{theorem}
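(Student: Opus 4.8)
The plan is to show that $\tilde{W}$ satisfies the lower controlled fusion frame inequality, since it is already assumed to be a $CC'$-controlled fusion Bessel sequence (hence the upper bound is free). The starting point is the defining relation $T^{*}_{W}QT_{\tilde{W}}=I_{H}$ for the Q-dual. First I would take an arbitrary $f\in H$ and write $\|f\|^{2}=\langle f,f\rangle=\langle T^{*}_{W}QT_{\tilde{W}}f,f\rangle=\langle QT_{\tilde{W}}f,T_{W}f\rangle$, moving $T^{*}_{W}$ across the inner product to $T_{W}$ on the right. Then apply Cauchy--Schwarz in the space $(\bigoplus_{i\in I}H)_{\ell^{2}}$ to get $\|f\|^{2}\le \|Q\|\,\|T_{\tilde{W}}f\|\,\|T_{W}f\|$.

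Next I would invoke the Bessel bound $B$ for $W$ to bound $\|T_{W}f\|\le\sqrt{B}\,\|f\|$; this is exactly the upper inequality in the definition of the controlled analysis operator, equivalently Theorem \ref{1}. Substituting and cancelling one factor of $\|f\|$ yields
\begin{align*}
\|f\|\le \|Q\|\,\sqrt{B}\,\|T_{\tilde{W}}f\|,
\end{align*}
so that $\|T_{\tilde{W}}f\|^{2}\ge \|Q\|^{-2}B^{-1}\|f\|^{2}$. Since $\|T_{\tilde{W}}f\|^{2}=\sum_{i\in I}z_{i}^{2}\langle\pi_{\tilde{W}_{i}}C'f,\pi_{\tilde{W}_{i}}Cf\rangle$, this is precisely the lower controlled fusion frame inequality for $\tilde{W}$ with lower bound $A'=\|Q\|^{-2}B^{-1}$. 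Combined with the Bessel bound for $\tilde{W}$, this shows $\tilde{W}$ is a $CC'$-controlled fusion frame for $H$.

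I do not anticipate a serious obstacle here; the argument is a direct mimic of the perturbation-type estimates already used earlier in the paper (e.g. in the theorem on $\|f-T^{*}_{Z}T_{W}f\|\le\epsilon\|f\|$). The only point requiring a little care is making sure that $Q$ is bounded and that $\|T_{W}\|\le\sqrt{B}$ is legitimately available — both are part of the hypotheses (boundedness of $Q$ is built into the definition of Q-dual, and the Bessel bound comes from $W$ being a controlled fusion frame, hence in particular a controlled Bessel fusion sequence). One should also note in passing that the lower bound obtained is positive precisely because $B<\infty$ and $Q$ is bounded, so the frame condition is genuinely satisfied and not vacuous.
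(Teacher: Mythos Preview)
Your proof is correct and follows essentially the same route as the paper: use the Q-dual identity $T_W^{*}QT_{\tilde W}=I_H$ to rewrite $\langle f,f\rangle$, apply Cauchy--Schwarz together with the boundedness of $Q$, and then invoke the Bessel bound $B$ for $W$ to extract the lower bound $B^{-1}\Vert Q\Vert^{-2}$ for $\tilde W$. The only cosmetic difference is that the paper squares the identity first and works with $\Vert f\Vert^{4}$, whereas you work directly with $\Vert f\Vert^{2}$; your version is marginally cleaner but otherwise identical.
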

\begin{proof}
Let $f\in H$ and $B$ an upper bound for $W$. Therefore
\begin{align*}
\Vert f\Vert ^{4}&=\vert\langle f,f \rangle\vert^{2}\\
&=\vert\langle Q^{*} T_ {W_{i}}f, T_{\tilde{W}_{i}}f\rangle\vert^{2}\\
&=\vert\langle QT_{\tilde{W}_{i}} f,T_ {W_{i}}f\rangle\vert^{2}\\
&\leq \Vert T_ {\tilde{W}_{i}}f \Vert^{2} \Vert Q\Vert ^{2} \Vert T_ {W_{i}}f \Vert ^{2}\\
&\leq \Vert T_ {\tilde{W}_{i}}f \Vert^{2} \Vert Q \Vert^{2} B \Vert f \Vert^{2}\\
&=\Vert Q \Vert^{2} B \Vert f \Vert^{2}\sum _{i\in I} z_{i}^{2} \langle \pi_{\tilde{W}_{i}} C^{\prime}f,\pi_{\tilde{W}_{i}} Cf \rangle ^{2}.
\end{align*}
Hence,
\begin{align*}
B^{-1} \Vert Q \Vert^{-2}\Vert f\Vert ^{2}\leq \sum _{i\in I} z_{i}^{2} \langle \pi_{\tilde{W}_{i}} C^{\prime}f,\pi_{\tilde{W}_{i}} Cf \rangle ^{2}
\end{align*}
and this completes the proof.
\end{proof}
\begin{corollary}
If $C_{op}$ and $D_{op}$ are the optimal bounds of $\tilde{W}$, then
$$C_{op}\geq B_{op}^{-1}\Vert Q\Vert^{-2}\ \ \ and  \ \ \ D_{op}\geq A_{op}^{-1}\Vert Q\Vert^{-2}$$
which $A_{op}$ and $B_{op}$ are the optimal bounds of $W$, respectively.
\end{corollary}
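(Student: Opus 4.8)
The plan is to extract the two inequalities directly from the computation in the preceding theorem, applied once in each direction, and then to invoke the definition of the optimal bounds.

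First I would record precisely what the proof of the preceding theorem produces. Starting from $\langle f,f\rangle=\langle Q^{*}T_{W}f,T_{\tilde W}f\rangle$ and applying Cauchy--Schwarz together with the Bessel estimate $\Vert T_{W}f\Vert^{2}\leq B\Vert f\Vert^{2}$, one obtains $\Vert f\Vert^{2}\leq \Vert Q\Vert^{2}B\,\Vert T_{\tilde W}f\Vert^{2}$, that is,
\begin{align*}
B^{-1}\Vert Q\Vert^{-2}\Vert f\Vert^{2}\leq \sum_{i\in I} z_{i}^{2}\langle \pi_{\tilde W_{i}}C'f,\pi_{\tilde W_{i}}Cf\rangle ,
\end{align*}
and this holds for \emph{every} upper (Bessel) bound $B$ of $W$. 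Hence $B^{-1}\Vert Q\Vert^{-2}$ is an admissible lower controlled fusion frame bound for $\tilde W$ whenever $B$ is an admissible upper bound for $W$. Since the optimal upper bound $B_{op}$ of $W$ is in particular an admissible upper bound, $B_{op}^{-1}\Vert Q\Vert^{-2}$ is an admissible lower bound for $\tilde W$; and since the optimal lower bound $C_{op}$ of $\tilde W$ is the supremum of all admissible lower bounds, $C_{op}\geq B_{op}^{-1}\Vert Q\Vert^{-2}$, which is the first inequality.

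For the second inequality I would use the symmetry supplied by the equivalence theorem above: from $T^{*}_{W}QT_{\tilde W}=I_{H}$ we also have $T^{*}_{\tilde W}Q^{*}T_{W}=I_{H}$, so $W$, being a $CC'$-controlled fusion frame and hence a $CC'$-controlled fusion Bessel sequence, is a $Q^{*}$-dual of $\tilde W$. Applying the previous theorem to the pair $(\tilde W,W)$ with $Q^{*}$ in place of $Q$ (and recalling that $\tilde W$ is already known to be a frame, so its optimal upper bound $D_{op}$ is finite), the same computation shows that $D^{-1}\Vert Q^{*}\Vert^{-2}$ is an admissible lower bound for $W$ for every admissible upper bound $D$ of $\tilde W$. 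Using $\Vert Q^{*}\Vert=\Vert Q\Vert$ and taking $D=D_{op}$, we get that $D_{op}^{-1}\Vert Q\Vert^{-2}$ is an admissible lower bound for $W$, so that $A_{op}\geq D_{op}^{-1}\Vert Q\Vert^{-2}$, which rearranges to $D_{op}\geq A_{op}^{-1}\Vert Q\Vert^{-2}$.

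The main point to be careful about is the bookkeeping of what ``optimal'' means: the optimal lower bound is the \emph{largest} admissible lower bound and the optimal upper bound the \emph{smallest} admissible upper bound, so exhibiting a single admissible bound is enough to bound the optimal one in the asserted direction. One should also double-check that the argument in the previous theorem uses nothing about $W$ beyond its being a Bessel sequence with bound $B$, so that it applies verbatim with the roles of $W$ and $\tilde W$ interchanged. Beyond this, no new estimate is required.
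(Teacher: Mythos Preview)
Your argument is correct and is exactly the natural justification the paper has in mind: the corollary is stated in the paper without any proof, as an immediate consequence of the preceding theorem, and your two steps (taking $B=B_{op}$ in the inequality $B^{-1}\Vert Q\Vert^{-2}\leq C_{op}$ coming from that theorem, and then swapping the roles of $W$ and $\tilde W$ via the adjoint relation $T_{\tilde W}^{*}Q^{*}T_{W}=I_{H}$ to get the second inequality) are precisely how one fills in the omitted details.
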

\begin{definition}
Let   $W:=\lbrace(W_{i},v_{i})\rbrace _{i \in I}$ and  $Z:=\lbrace(Z_{i},v_{i})\rbrace _{i \in I}$ be  $CC'$-controlled fusion frame for $H$  where $C,C^{\prime} \in GL(H)$ and $0\leq \lambda_{1} , \lambda_{2}<1$ be real numbers. Suppose that  $\beta:=\lbrace c_i\rbrace_{i\in I}\in \ell^{2}(I)$ is a positive sequence of real numbers. If
\begin{small}
\begin{align*}
\Vert v_{i}(C^{*}\pi_{W_{i}} C^{\prime} -C^{*} \pi_{{Z}_{i}}C^{\prime})^{\frac{1}{2}}f \Vert_{2} &\leq \lambda _{1}\Vert v_{i}(C^{*}\pi_{W_{i}} C^{\prime})^{\frac{1}{2}}f \Vert_{2}+\lambda _{2}\Vert v_{i}(C^{*} \pi_{{Z}_{i}}C^{\prime})^{\frac{1}{2}}f \Vert_{2}+\\
 & \ \ \ \ \ \ \ \ \ \ \ \ \ \ \ \ \ \ \ \ \ \ \ \ \ \ \ \ \ \ \ \ +\Vert\beta\Vert_{2}\Vert f \Vert.\\
\end{align*}
\end{small}
then, we say that $Z:=\lbrace(Z_{i},v_{i})\rbrace _{i \in I}$ is a $(\lambda _{1} ,\lambda _{2},\beta ,C ,C^{\prime})$-perturbation  of  $W=\lbrace(W_{i},v_{i})\rbrace _{i \in I}$.
\end{definition}
\begin{theorem}
Let $W:=\lbrace(W_{i},v_{i})\rbrace _{i \in I}$ be a $CC'$-controlled fusion frame for $H$ with frame bounds $A,B$, and $Z:=\lbrace(Z_{i},v_{i})\rbrace _{i \in I}$ be a $(\lambda _{1} ,\lambda _{2},\beta ,C,C^{\prime})$-perturbation  of  $W:=\lbrace(W_{i},v_{i})\rbrace _{i \in I}$. Then $Z:=\lbrace(Z_{i},v_{i})\rbrace _{i \in I}$ is a $CC'$-controlled fusion frame for  $H$ with bounds:
$$(\dfrac{(1-\lambda_{1})\sqrt{A}-\Vert\beta\Vert_{2}}{1+\lambda _{2}})^{2} \ \ , \ \ (\dfrac{(1+\lambda_{1})\sqrt{B} +\Vert\beta\Vert_{2} }{1-\lambda_{2}})^{2} $$
\end{theorem}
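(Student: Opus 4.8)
The plan is to run the classical Paley--Wiener perturbation scheme, carried out at the level of the controlled quadratic forms rather than the analysis operators, so that the whole proof reduces to elementary manipulations with two scalar inequalities. For a family $X=\{X_i\}_{i\in I}$ of closed subspaces of $H$, abbreviate
$$\Phi_X(f):=\Big\Vert v_i\big(C^{*}\pi_{X_i}C^{\prime}\big)^{\frac{1}{2}}f\Big\Vert_2=\Big(\sum_{i\in I}v_i^{2}\langle\pi_{X_i}C^{\prime}f,\pi_{X_i}Cf\rangle\Big)^{\frac{1}{2}},$$
so that ``$W$ is a $CC^{\prime}$-controlled fusion frame with bounds $A,B$'' means exactly $\sqrt{A}\,\Vert f\Vert\le\Phi_W(f)\le\sqrt{B}\,\Vert f\Vert$ for all $f\in H$, and the hypothesis that $Z$ is a $(\lambda_1,\lambda_2,\beta,C,C^{\prime})$-perturbation of $W$ reads $\Delta(f)\le\lambda_1\Phi_W(f)+\lambda_2\Phi_Z(f)+\Vert\beta\Vert_2\Vert f\Vert$, where $\Delta(f):=\big\Vert v_i(C^{*}\pi_{W_i}C^{\prime}-C^{*}\pi_{Z_i}C^{\prime})^{\frac{1}{2}}f\big\Vert_2$. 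As everywhere in the paper, writing these square roots presupposes that each operator $C^{*}\pi_{W_i}C^{\prime}-C^{*}\pi_{Z_i}C^{\prime}$ is positive.

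The first step is to record the two triangle-type inequalities $\Phi_Z(f)\le\Phi_W(f)+\Delta(f)$ and $\Phi_Z(f)\ge\Phi_W(f)-\Delta(f)$, obtained exactly as in the earlier theorem on subspace perturbations: for each index $i$, from $C^{*}\pi_{Z_i}C^{\prime}=C^{*}\pi_{W_i}C^{\prime}-(C^{*}\pi_{W_i}C^{\prime}-C^{*}\pi_{Z_i}C^{\prime})$ and positivity of the subtracted operator one gets $\Vert v_i(C^{*}\pi_{Z_i}C^{\prime})^{\frac{1}{2}}f\Vert^{2}\le\Vert v_i(C^{*}\pi_{W_i}C^{\prime})^{\frac{1}{2}}f\Vert^{2}+\Vert v_i(C^{*}\pi_{W_i}C^{\prime}-C^{*}\pi_{Z_i}C^{\prime})^{\frac{1}{2}}f\Vert^{2}$, and the companion estimate with $\Phi_W$ and $\Phi_Z$ exchanged follows from $C^{*}\pi_{W_i}C^{\prime}=C^{*}\pi_{Z_i}C^{\prime}+(C^{*}\pi_{W_i}C^{\prime}-C^{*}\pi_{Z_i}C^{\prime})$; summing over $i$ and using $\sqrt{a^{2}+b^{2}}\le a+b$ for $a,b\ge 0$ gives the two inequalities. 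The two frame bounds then come out by pure algebra. For the upper bound, substitute the perturbation hypothesis into $\Phi_Z(f)\le\Phi_W(f)+\Delta(f)$ to get $\Phi_Z(f)\le(1+\lambda_1)\Phi_W(f)+\lambda_2\Phi_Z(f)+\Vert\beta\Vert_2\Vert f\Vert$; since $\lambda_2<1$ this rearranges to $(1-\lambda_2)\Phi_Z(f)\le(1+\lambda_1)\Phi_W(f)+\Vert\beta\Vert_2\Vert f\Vert\le\big((1+\lambda_1)\sqrt{B}+\Vert\beta\Vert_2\big)\Vert f\Vert$, and squaring gives the asserted upper bound (which also confirms, consistently with the definition of perturbation, that $Z$ is a $CC^{\prime}$-controlled Bessel fusion sequence, so every sum above converges and the rearrangements are legitimate). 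For the lower bound, substitute into $\Phi_Z(f)\ge\Phi_W(f)-\Delta(f)$ to obtain $\Phi_Z(f)\ge(1-\lambda_1)\Phi_W(f)-\lambda_2\Phi_Z(f)-\Vert\beta\Vert_2\Vert f\Vert$, hence $(1+\lambda_2)\Phi_Z(f)\ge\big((1-\lambda_1)\sqrt{A}-\Vert\beta\Vert_2\big)\Vert f\Vert$, and squaring yields the asserted lower bound.

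The step needing care is this last squaring: it produces a genuine, strictly positive lower frame constant only when $(1-\lambda_1)\sqrt{A}-\Vert\beta\Vert_2>0$, so I would add the smallness condition $\Vert\beta\Vert_2<(1-\lambda_1)\sqrt{A}$ --- the usual Paley--Wiener hypothesis, tacitly needed for the displayed lower bound even to make sense. Beyond that the proof is routine and introduces nothing not already present in the earlier subspace-perturbation theorem; the current statement is simply its $(\lambda_1,\lambda_2,\beta)$-parametrised refinement, and I would keep the per-index inequality underpinning the two triangle inequalities phrased exactly as there, both arguments resting on positivity of the difference operators $C^{*}\pi_{W_i}C^{\prime}-C^{*}\pi_{Z_i}C^{\prime}$.
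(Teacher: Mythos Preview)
Your argument follows essentially the same route as the paper's proof: derive the two triangle-type estimates $\Phi_Z(f)\le\Phi_W(f)+\Delta(f)$ and $\Phi_Z(f)\ge\Phi_W(f)-\Delta(f)$, substitute the perturbation hypothesis, rearrange to isolate $\Phi_Z(f)$, use the frame bounds for $W$, and square. Your write-up is in fact cleaner on two points: you justify the triangle inequalities via the quadratic-form identity and $\sqrt{a^{2}+b^{2}}\le a+b$ (the paper writes an ``equality'' $(C^{*}\pi_{Z_i}C')^{1/2}f=(C^{*}\pi_{Z_i}C'-C^{*}\pi_{W_i}C')^{1/2}f+(C^{*}\pi_{W_i}C')^{1/2}f$ that is not literally true for operator square roots), and you make explicit the tacit smallness condition $\Vert\beta\Vert_2<(1-\lambda_1)\sqrt A$ needed for the lower bound to be positive.
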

\begin{proof}
Let $f \in H$. We have
\begin{small}
\begin{align*}
\Vert v_{i}( C^{*} \pi_{{W}_{i}}C^{\prime})^\frac{1}{2}f \Vert_{2}&=\Vert v_{i}(C^{*}\pi_{{Z}_{i}}C^{\prime}-C^{*}\pi_{W_{i}} C^{\prime})^{\frac{1}{2}}f+v_{i}(C ^{*}\pi_{W_{i}} C^{\prime})^{\frac{1}{2}}f\Vert_{2}\\
&\leq \Vert v_{i}(C^{*} \pi_{{Z}_{i}}C^{\prime}-C ^{*}\pi_{W_{i}}C^{\prime})^{\frac{1}{2}}f \Vert_{2} +\Vert v_{i}(C^{*}\pi_{W_{i}}C^{\prime})^{\frac{1}{2}}f \Vert_{2}\\
&\leq \lambda _{1}\Vert v_{i}(C^{*}\pi_{W_{i}} C^{\prime})^{\frac{1}{2}}f \Vert_{2}+\lambda _{2}\Vert v_i(C^* \pi_{Z_{i}}C^{\prime})^{\frac{1}{2}}f \Vert_{2}+ \\
&\ \ \ \ \ \ \ \ \ \ \ +\Vert\beta\Vert_{2}\Vert f \Vert+
\Vert v_{i}(C^{*}\pi_{W_{i}} C^{\prime})^{\frac{1}{2}}f \Vert_{2}.
\end{align*}
\end{small}
Hence,
\begin{align*}
(1-\lambda_{2})\Vert (v_{i} (C^{*} \pi_{{Z}_{i}}C^{\prime})^{\frac{1}{2}}f \Vert_{2}\leq(1+\lambda_{1})\Vert v_{i}(C ^{*}\pi_{W_{i}} C^{\prime})^{\frac{1}{2}}f \Vert_{2}+\Vert\beta\Vert_{2}\Vert f \Vert.
\end{align*}
Since $W$ is a $CC'$-controlled fusion frame with bounds $A$ and $B$, then
\begin{align*}
\Vert v_{i}(C^{*}\pi_{W_{i}}C^{\prime})^{\frac{1}{2}}f \Vert^{2}
=\sum _{i \in I} v_{i}^{2}\langle \pi_{W_{i}}  C^{\prime}f, \pi _{W_{i}} C f\rangle
\leq B \Vert f \Vert^{2}.
\end{align*}
So,
\begin{small}
\begin{align*}
\Vert v_{i} (C^{*} \pi_{{Z}_{i}}C^{\prime})^{\frac{1}{2}}f \Vert_{2}&\leq\dfrac{(1+\lambda_{1})\Vert v_{i}(C^{*}\pi_{W_{i}} C^{\prime})^{\frac{1}{2}}f \Vert_{2}+\Vert\beta\Vert_{2}\Vert f \Vert }{1-\lambda_{2}}\\
&\leq(\dfrac{(1+\lambda_{1})\sqrt{B} +\Vert\beta\Vert_{2} }{1-\lambda_{2}}\Vert f \Vert).
\end{align*}
Thus
\begin{align*}
\sum _{i \in I} v_{i}^{2}\langle \pi_{Z_{i}}  C^{\prime}f, \pi _{Z_{i}} C f\rangle=\Vert v_{i} (C^{*} \pi_{{Z}_{i}}C^{\prime})^{\frac{1}{2}}f \Vert^{2}_{2}&\leq(\dfrac{(1+\lambda_{1})\sqrt{B} +\Vert\beta\Vert_{2} }{1-\lambda_{2}}\Vert f \Vert)^{2}.
\end{align*}
\end{small}
Now, for the lower bound, we have
\begin{small}
\begin{align*}
\Vert v_{i} (C^{*} \pi_{Z_{i}}C^{\prime})^{\frac{1}{2}}f \Vert_{2}&=\Vert v_{i}(C^{*}\pi_{W_{i}} C^{\prime})^{\frac{1}{2}}f -v_{i}(C^{*}\pi_{W_{i}} C^{\prime}
- C ^{*}\pi_{{Z}_{i}}C^{\prime})^{\frac{1}{2}}f\Vert_{2}\\
&\geq \Vert v_{i}(C^{*}\pi_{W_{i}}C^{\prime})^{\frac{1}{2}}f \Vert_{2}-\Vert v_{i}(C ^{*}\pi_{Z_{i}} C^{\prime}- C^{*} \pi_{{Z}_{i}}C^{\prime})^{\frac{1}{2}}f\Vert_{2}\\ &\geq \Vert v_{i}(C ^{*}\pi_{W_{i}} C^{\prime})^{\frac{1}{2}}f \Vert_{2}-\lambda _{1}\Vert v_{i}(C^{*}\pi_{W_{i}} C^{\prime})^{\frac{1}{2}}f \Vert_{2}\\
&\ \ \ \ \ -\lambda _{2}\Vert v_{i}( C^{*} \pi_{{Z}_{i}}C^{\prime})^{\frac{1}{2}}f \Vert_{2} -\Vert\beta\Vert_{2}\Vert f \Vert.
\end{align*}
\end{small}
Therefore,
\begin{align*}
(1+\lambda_{2})\Vert v_{i} (C^{*} \pi_{{Z}_{i}}C^{\prime})^{\frac{1}{2}}f \Vert_{2}\geq(1-\lambda_{1})\Vert v_{i}(C^{*}\pi_{W_{i}} C^{\prime})^{\frac{1}{2}}f \Vert_{2}-\Vert\beta\Vert_{2}\Vert f \Vert,
\end{align*}
or
$$\Vert v_{i} (C^{*} \pi_{{Z}_{i}}C^{\prime})^{\frac{1}{2}}f \Vert_{2}\geq\dfrac{(1-\lambda_{1})\Vert v_{i}(C^{*}\pi_{W_{i}} C^{\prime})f \Vert_{2}-\Vert\beta\Vert_{2}\Vert f \Vert}{1+\lambda _{2}}. $$
Thus, we get
\begin{align*}
\Vert v_{i}(C^{*}\pi_{W_{i}}C^{\prime})^{\frac{1}{2}}f \Vert^{2}=\sum _{i \in I} v_{i}^{2}\langle \pi_{W_{i}}  C^{\prime}f, \pi _{W_{i}} C f \rangle\geq  A\Vert f \Vert^{2}.
\end{align*}
So,
\begin{align*}
\Vert v_{i} (C^{*} \pi_{{Z}_{i}}C^{\prime})^{\frac{1}{2}}f \Vert_{2}\geq(\dfrac{(1-\lambda_{1})\sqrt{A}-\Vert\beta\Vert_{2}}{1+\lambda _{2}}\Vert f \Vert).
\end{align*}
Thus,
\begin{align*}
\sum _{i \in I} v_{i}^{2}\langle \pi_{Z_{i}}  C^{\prime}f, \pi _{Z_{i}} C f\rangle
&=\Vert v_{i} (C^{*} \pi_{{Z}_{i}}C^{\prime})^{\frac{1}{2}}f \Vert^{2}_{2}\\
&\geq(\dfrac{(1-\lambda_{1})\sqrt{A}-\Vert\beta\Vert_{2}}{1+\lambda _{2}}\Vert f \Vert)^{2}
\end{align*}
and the proof is completed.
\end{proof}

\end{document}